\newcommand{\ve}{{\bf e}}
\begin{document}
	\large
	
	\title{Triangular Decomposition of {Third Order Hermitian Tensors}}%\footnote{This work was partially supported by Hong Kong Innovation and Technology Commission (InnoHK Project CIMDA).}}
\author{Liqun Qi\footnote{Department of Applied Mathematics, The Hong Kong Polytechnic University, Hung Hom, Kowloon, Hong Kong.
		%Department of Mathematics, School of Science, Hangzhou Dianzi University, Hangzhou 310018 China
		({\tt maqilq@polyu.edu.hk}).}
	\and { \
		Chunfeng Cui\footnote{LMIB of the Ministry of Education, School of Mathematical Sciences, Beihang University, Beijing 100191 China.
			({\tt chunfengcui@buaa.edu.cn}).}
	}
	\and {and \
		Ziyan Luo\footnote{School of Mathematics and Statistics,
  Beijing Jiaotong University, Beijing 100044, China. ({\tt zyluo@bjtu.edu.cn}).}
	}
}
%This author's work was supported by Beijing Natural Science Foundation (Grant No. Z190002).}
\date{\today}
\maketitle

\begin{abstract}

%We introduce sub-symmetric tensors and show that the tensor power of a sub-symmetric tensor is symmetric.   Such a symmetric tensor is called decomposable.   We then introduce lower triangular  sub-symmetric tensors.   If a symmetric tensor is the tensor power of a lower triangular sub-symmetric tensor, then it is called triangular decomposable.
 We define lower triangular tensors, and show that all diagonal entries of such a tensor are eigenvalues of that tensor.   We then define lower triangular sub-symmetric tensors, and show that the number of independent entries of a lower triangular sub-symmetric tensor is the same as that of a symmetric tensor of the same order and dimension.   We further  introduce third order Hermitian tensors, third order positive semi-definite Hermitian tensors, and third order positive semi-definite symmetric tensors.     Third order completely positive tensors are positive semi-definite symmetric tensors.  Then we show that a third order positive semi-definite Hermitian tensor is triangularly decomposable.   This generalizes the classical result of Cholesky decomposition in matrix analysis.
%\blue{An algorithm}
%%We then discuss its application in solving some nonlinear systems.  Algorithm
%and numerical results are presented.

%In this paper, we introduce triangular Pascal tensors.   For $m$th order $n$-dimensional tensors, there are $m$ distinct triangular Pascal tensors.

\medskip

% \medskip

\textbf{Key words.}  Lower triangular tensors, lower triangular sub-symmetric tensors, third order Hermitian tensors, third order {positive semi-definite} Hermitian tensors, Cholesky decomposition.
%\medskip
% \textbf{AMS subject classifications. }
\end{abstract}

\renewcommand{\Re}{\mathds{R}}
\newcommand{\rank}{\mathrm{rank}}
\newcommand{\X}{\mathcal{X}}
\newcommand{\A}{\mathcal{A}}
\newcommand{\I}{\mathcal{I}}
\newcommand{\B}{\mathcal{B}}
\newcommand{\C}{\mathcal{C}}
\newcommand{\D}{\mathcal{D}}
\newcommand{\LL}{\mathcal{L}}
\newcommand{\OO}{\mathcal{O}}
\newcommand{\e}{\mathbf{e}}
\newcommand{\0}{\mathbf{0}}
\newcommand{\dd}{\mathbf{d}}
\newcommand{\ii}{\mathbf{i}}
\newcommand{\jj}{\mathbf{j}}
\newcommand{\kk}{\mathbf{k}}
\newcommand{\va}{\mathbf{a}}
\newcommand{\vb}{\mathbf{b}}
\newcommand{\vc}{\mathbf{c}}
\newcommand{\vq}{\mathbf{q}}
\newcommand{\vg}{\mathbf{g}}
\newcommand{\pr}{\vec{r}}
\newcommand{\pc}{\vec{c}}
\newcommand{\ps}{\vec{s}}
\newcommand{\pt}{\vec{t}}
\newcommand{\pu}{\vec{u}}
\newcommand{\pv}{\vec{v}}
\newcommand{\pn}{\vec{n}}
\newcommand{\pp}{\vec{p}}
\newcommand{\pq}{\vec{q}}
\newcommand{\pl}{\vec{l}}
\newcommand{\vt}{\rm{vec}}
\newcommand{\vx}{\mathbf{x}}
\newcommand{\vy}{\mathbf{y}}
\newcommand{\vu}{\mathbf{u}}
\newcommand{\vv}{\mathbf{v}}
\newcommand{\y}{\mathbf{y}}
\newcommand{\vz}{\mathbf{z}}
\newcommand{\T}{\top}
\newcommand{\R}{\mathcal{R}}

\newtheorem{Thm}{Theorem}[section]
\newtheorem{Def}[Thm]{Definition}
\newtheorem{Ass}[Thm]{Assumption}
\newtheorem{Lem}[Thm]{Lemma}
\newtheorem{Prop}[Thm]{Proposition}
\newtheorem{Cor}[Thm]{Corollary}
\newtheorem{example}[Thm]{Example}
\newtheorem{remark}[Thm]{Remark}

\section{Introduction}

The Cholesky decomposition is one of the classical factorizations in matrix analysis \cite{HJ13}.   A
positive {semi-definite} symmetric (Hermitian) matrix can always be decomposed to the product of a lower triangular matrix and its transpose.  {See Page 147 of \cite{Gv96}.   Also see \cite{Hi90}.}  Can this {elegant property} be extended to the higher order {tensor} case?   In this paper, we explore this {intriguing}  problem.

An $m$th order $n$-dimensional real cubic tensor $\A = \left(a_{i_1\dots i_m}\right)$ has entries $a_{i_1\dots i_m} \in {\mathbb R}$ for $i_1, \dots, i_m = 1, \dots, n$, where $m, n \ge 2$. %, %where ${\mathbb F} = {\mathbb R}$ or $\mathbb C$.
If $a_{i_1\dots i_m}$ is invariant for any permutation of its indices, then $\A$ is called a symmetric tensor.   Denote the set of all $m$th order $n$-dimensional real cubic tensors by $T_{m, n}$, and the set of all $m$th order $n$-dimensional real symmetric tensors by $S_{m, n}$, {respectively}.
%In the next two sections, we confine our discussion on real tensors.
If $\A \in T_{m, n}$, then it has $n^m$ independent entries.    On the other hand, if $\A \in S_{m, n}$, then it has $\left(n+m-1 \atop m\right)= {(n+m-1)! \over m!(n-1)!}$  independent entries.  {See \cite{QL17} for {more details on} these concepts.}

%In the next section, we introduce sub-symmetric tensors and their tensor powers.  Let $\B \in T_{m, n}$.  If $\B$ reduces to {an} $(m-1)$th order symmetric tensor as the first index of its entries is fixed, then we call $\B$ an $m$th order sub-symmetric tensor.   If there are $m$ $m$th order sub-symmetric tensors, then we define their chain product as a cubic tensor $\A \in T_{m, n}$.   If all these $m$ $m$th order sub-symmetric tensors are the same tensor $\B$, then we show that their chain product $\A$ is a symmetric tensor, and call $\A$ the tensor power of $\B$.   In this case, we say that $\A$ is decomposable.

{In the next section}, we introduce lower triangular tensor and lower triangular sub-symmetric tensors. We show that all diagonal entries of a lower triangular tensor are eigenvalues of that tensor.  We further consider  a lower triangular sub-symmetric tensor $\B \in T_{m, n}$.   Then we show that $\B$ has $\left(n+m-1 \atop m\right)= {(n+m-1)! \over m!(n-1)!}$  independent entries.   This is the same as a symmetric tensor in $S_{m, n}$.   This is not by chance.   %Let $\A$ be the tensor power of such a lower triangular sub-symmetric tensor $\B$.    Then we say that $\A$ is triangularly decomposable.

We intend to generalize Cholesky decomposition to third order tensors.   As this involves solutions of quadratic equations and cubic equations, we have to consider the complex case.   This motivates us to introduce third order Hermitian tensors {and} third order positive semi-definite Hermitian tensors in Section 3.  {We also define third order positive definite Hermitian tensors there.}  In the real case, we thus also have third order positive semi-definite symmetric tensors and third order positive definite symmetric tensors.  All of these concepts are new in tensor analysis.    We show that third order completely positive tensors are positive semi-definite symmetric tensors,  and several classes of completely positive tensors are positive definite symmetric tensors.

In Section 4, we show that a third order positive semi-definite Hermitian tensor is always triangularly decomposable.

%We discuss possible application of this result in solving some nonlinear systems in Section 6.
%In Section 5, we present an algorithm and some numerical results.

Some final remarks are made in Section 5.

{\section{Lower Triangular Sub-Symmetric Tensors}}

A cubic tensor ${\cal T} \in {L_{m, n}} = \left(t_{i_1\dots i_m}\right)$ is called a {\bf lower triangular tensor} if
\begin{equation}
	t_{i_1\dots i_m} = 0
\end{equation}
as long as there is $j = 2, \dots, m$ such that
\begin{equation}
	i_j > i_1.
\end{equation}

In other words, by fixing the first index $i_1$, each subtensor of $\cal T$ reduces to an $(m-1)$th   tensor, where  only the $i_1$-dimensional tensor positioned at the bottom-left corner  may potentially possess nonzero values.
Please see Fig.~\ref{fig:triangulartensor} for an illustration.
Here, the first index $i_1$ is fixed.   We may also fix the other indices, but we do not go to such discussion.

Theorem 1(c)
%and Proposition 3
of  \cite{Qi05}, i.e., Theorem 2.12(c)
%and Proposition 2.16
of \cite{QL17} studied eigenvalues
%and determinants of
diagonal tensors.  In the following proposition, we study eigenvalues
%and determinants
of lower triangular tensors.   {This} result  implies that our definition of lower triangular tensors is a proper generalization of the definition of lower triangular matrices.

\begin{Prop} \label{eig}
	Suppose that  ${\cal T}  = \left(t_{i_1\dots i_m}\right)\in {L_{m, n}}$ is a lower triangular tensor.
	Then each diagonal entry of $\cal T$ is an eigenvalue of $\cal T$.  %Each of these H-eigenvalues is of multiplicity $(m-1)^{n-1}$, and $\cal T$ has no N-eigenvalues.
\end{Prop}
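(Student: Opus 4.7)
The plan is to construct, for each fixed $k\in\{1,\dots,n\}$, a non-zero vector $\vx\in\cset^n$ solving $\mathcal{T}\vx^{m-1} = t_{kk\cdots k}\,\vx^{[m-1]}$. The crucial structural fact I would exploit is that, by lower triangularity, the $i$-th coordinate
\[
(\mathcal{T}\vx^{m-1})_i \;=\; \sum_{i_2,\dots,i_m\le i} t_{ii_2\cdots i_m}\, x_{i_2}\cdots x_{i_m}
\]
depends only on $x_1,\dots,x_i$, so the eigenvalue equation decouples into a triangular system that one can attack coordinate by coordinate.

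I would first zero out the early entries by setting $x_j=0$ for $j<k$. The first $k-1$ equations then reduce to $0=0$, and the equation at $i=k$ becomes $t_{kk\cdots k}\,x_k^{m-1}=t_{kk\cdots k}\,x_k^{m-1}$, which is automatic. The remaining task is to pick $x_k,\dots,x_n$, not all zero, so that the equations indexed by $i=k+1,\dots,n$ also hold.

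The main obstacle is that the naive strategy --- fix $x_k=1$ and, for $i=k+1,\dots,n$ in order, treat the $i$-th equation as a univariate polynomial equation in $x_i$ and invoke the fundamental theorem of algebra --- can fail at indices $i$ with $t_{ii\cdots i}=t_{kk\cdots k}$, because there the coefficient of $x_i^{m-1}$ vanishes and the resulting equation may collapse to an unsatisfiable nonzero constant. My fix would be to set
\[
i^{*} \;:=\; \max\bigl\{\,j\ge k : t_{jj\cdots j}=t_{kk\cdots k}\,\bigr\},
\]
put $x_j=0$ for every $j<i^{*}$, and set $x_{i^{*}}=1$. A brief check then shows that every equation with index $\le i^{*}$ is automatically satisfied: for $j<i^{*}$ every summand contains a factor $x_{i_\ell}$ with $i_\ell\le j<i^{*}$, which vanishes, while at $i=i^{*}$ only the all-$i^{*}$ term survives and its coefficient $t_{i^{*}i^{*}\cdots i^{*}}$ matches $t_{kk\cdots k}$ by the definition of $i^{*}$.

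Finally, for each $i=i^{*}+1,\dots,n$, the $i$-th equation, viewed as a polynomial in $x_i$ with the previously constructed $x_{i^{*}},\dots,x_{i-1}$ plugged in, has leading coefficient $t_{ii\cdots i}-t_{kk\cdots k}$, which is nonzero by the maximality of $i^{*}$. Hence it is a non-constant univariate polynomial of degree $m-1$ over $\cset$ and admits a root $x_i$ by the fundamental theorem of algebra. Iterating in $i$ from $i^{*}+1$ up to $n$ produces the desired $\vx$, which is nonzero because $x_{i^{*}}=1$, so $t_{kk\cdots k}$ is indeed an eigenvalue of $\mathcal{T}$.
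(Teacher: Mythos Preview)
Your proof follows the same forward-substitution strategy as the paper: zero out the early coordinates, normalize one coordinate to $1$, and then solve for the remaining coordinates one at a time using the triangular structure and the fundamental theorem of algebra. Your version is in fact more careful than the paper's, which tacitly assumes each successive equation stays of degree $m-1$; your device of passing from $k$ to $i^{*}=\max\{j\ge k: t_{jj\cdots j}=t_{kk\cdots k}\}$ guarantees a nonzero leading coefficient at every later step and closes a gap the paper does not address.
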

{\begin{proof}
		{Consider the tensor eigenvalue problem $\mathcal T x^{m-1}=\lambda x^{[m-1]}$.
			Or equivalently,
			\begin{equation}\label{equ:eig}
				\sum_{i_2,\dots,i_m=1}^n t_{k,i_2\dots,i_m}x_{i_2}\cdots x_{i_m}=\sum_{i_2,\dots,i_m=1}^k t_{k,i_2\dots,i_m}x_{i_2}\cdots x_{i_m}=\lambda x_k^{m-1},
			\end{equation}
			where the first equality follows from $\cal T$ is a lower triangular tensor.}

		Suppose $i=1,\dots,n$ is fixed. Let $x\in\mathbb C^n$ be the $n$-dimensional vector, where {$x_1=\dots=x_{i-1}=0$, $x_i=1$. Then \eqref{equ:eig} holds directly for $k=1\dots,i-1$ since both hand sides are equal to zeros, and \eqref{equ:eig} for    $k=i$ holds with $\lambda=t_{i,\dots,i}$.
			The entry $x_{i+1}$ can be} solved by
		\begin{equation}
			\sum_{i_2,\dots,i_m=i}^{i+1} t_{i+1,i_2\dots,i_m}x_{i_2}\cdots x_{i_m}= t_{i,\dots,i} {x_{i+1}^{m-1}}.
		\end{equation}
		This is an $(m-1)$-th order equation over $x_{i+1}$ that has $m-1$ solutions, from which   we could obtain $x_{i+1}$.
		Following this process, for $k=i+2,\dots,n$, we have
		\begin{equation*}
			\sum_{i_2,\dots,i_m=i}^{k} t_{k,i_2\dots,i_m}x_{i_2}\cdots x_{i_m}= t_{i,\dots,i} {x_{k}^{m-1}}.
		\end{equation*}
		Therefore, we could   compute 			$x_{k}$, $k=i+2,\dots,n$ by solving the corresponding $(m-1)$th order equations.
		In together, we {conclude} $t_{i,\dots,i}$ is an eigenvalue of $\cal T$ with a multiplicity {at least} $(m-1)^{n-i}$.

		This completes the proof.
\end{proof}}

%\begin{Prop}
%Suppose that  ${\cal T}  {= \left(t_{i_1\dots i_m}\right)} \in {L_{m, n}}$ is a lower triangular tensor.
%Then the determinant of ${\cal T}$ is equal to $\prod_{i=1}^n {t}_{i\dots i}^{(m-1)^{(n-1)}}$.
%\end{Prop}
%\begin{proof}
%\end{proof}

\begin{figure}
	\includegraphics[width=\linewidth]{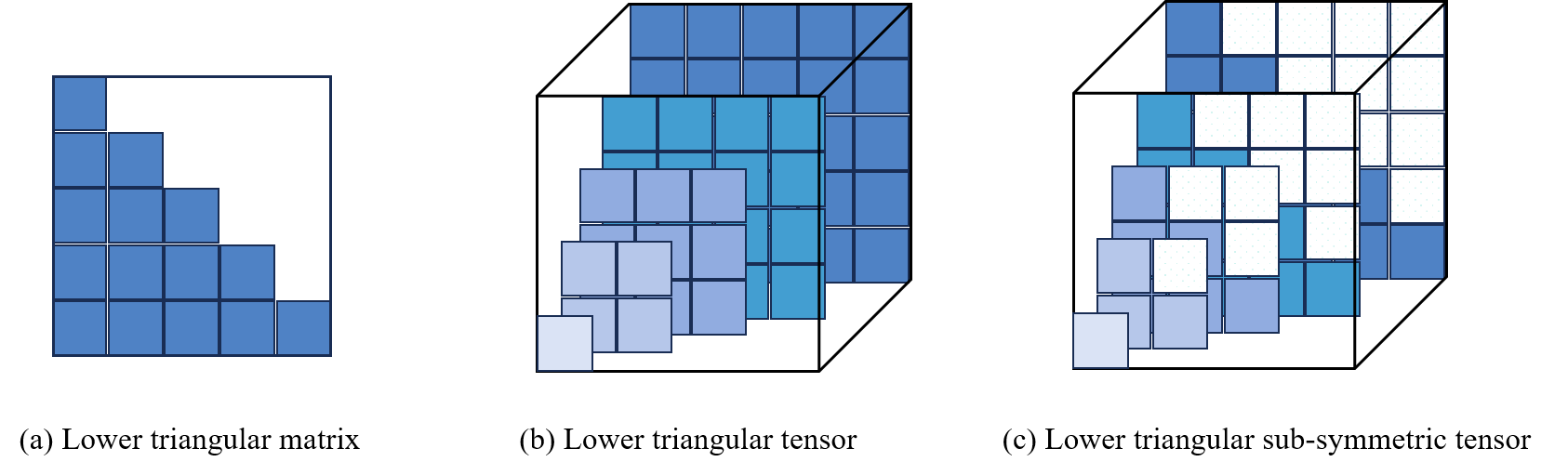}
	\caption{An illustration of a lower triangular matrix, a lower triangular tensor, and a  lower triangular  sub-symmetric tensor, respectively.}
	\label{fig:triangulartensor}
\end{figure}

{Suppose that  ${\cal T} \in {L_{m, n}}$ is a lower triangular tensor.    We say that $\cal T$ is a  {\bf lower triangular sub-symmetric tensor} if $t_{i_1\dots i_m}$ is invariant for any permutation of indices $i_2, \dots, i_m$.} Denote the set of all $m$th order $n$-dimensional lower triangular sub-symmetric tensors by {$LS_{m, n}$.
	
	%ccf: Can we use $L_{m,n}$ to denote  lower triangular tensors and  $LS_{m,n}$ to denote  lower triangular  sub-symmetric tensors?}

\begin{Prop}
	Suppose that ${\cal T} \in {LS_{m, n}}$ is a   lower triangular sub-symmetric tensor.   Then ${\cal T}$ has  $\left(n+m-1 \atop m\right)= {(n+m-1)! \over m!(n-1)!}$   independent entries.
\end{Prop}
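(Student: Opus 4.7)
The plan is to enumerate the independent entries directly by combining the triangular-support condition with the sub-symmetry in the last $m-1$ indices, and then recognize the count as the number of size-$m$ multisets from $\{1,\dots,n\}$.

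First I would isolate which entries of ${\cal T}\in LS_{m,n}$ can be free. By the lower triangular condition, $t_{i_1 i_2\cdots i_m}$ is automatically zero unless $i_j\le i_1$ for every $j=2,\dots,m$, so the only potentially independent entries are those indexed by tuples $(i_1,i_2,\dots,i_m)$ with $i_1=\max\{i_1,\dots,i_m\}$. Next I would use the sub-symmetry hypothesis, which asserts that $t_{i_1 i_2\cdots i_m}$ is invariant under arbitrary permutations of $i_2,\dots,i_m$. Consequently, the value of such an entry depends only on the pair $\bigl(i_1,\{\!\{i_2,\dots,i_m\}\!\}\bigr)$, where $\{\!\{\cdot\}\!\}$ denotes a multiset. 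Thus each equivalence class of free entries is determined by a choice of $i_1\in\{1,\dots,n\}$ together with a size-$(m-1)$ multiset drawn from $\{1,\dots,i_1\}$.

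The counting step is then routine. For fixed $i_1=k$, the number of size-$(m-1)$ multisets from $\{1,\dots,k\}$ equals $\binom{k+m-2}{m-1}$. Summing over $k$ and invoking the hockey-stick identity,
\begin{equation*}
\sum_{k=1}^{n}\binom{k+m-2}{m-1}=\sum_{j=m-1}^{n+m-2}\binom{j}{m-1}=\binom{n+m-1}{m},
\end{equation*}
which is the asserted count. Alternatively, I would exhibit a direct bijection that bypasses the identity: to any size-$m$ multiset $M\subseteq\{1,\dots,n\}$, assign the entry indexed by taking $i_1=\max M$ and letting $(i_2,\dots,i_m)$ be the remaining elements of $M$ in any order. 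The map is well defined by sub-symmetry and clearly invertible, so the independent entries are in bijection with multisets of size $m$ from $\{1,\dots,n\}$, of which there are $\binom{n+m-1}{m}$.

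There is no real obstacle in this proof; the one point requiring a little care is verifying that, under the sub-symmetry assumption on the trailing indices, the map from multisets to entries is genuinely well defined and genuinely injective. This amounts to checking that (i) the choice of ordering of $(i_2,\dots,i_m)$ does not affect the entry, which is exactly the sub-symmetry hypothesis, and (ii) two distinct multisets $M\ne M'$ produce distinct representative tuples, which is immediate from the fact that a multiset is uniquely reconstructed from its maximum together with the multiset of its remaining entries. Once these two checks are in place, the proposition follows.
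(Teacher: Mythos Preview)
Your proof is correct and follows essentially the same route as the paper: slice by the first index $i_1$, count $\binom{i_1+m-2}{m-1}$ independent entries in each slice, and sum; the paper telescopes via repeated applications of Pascal's rule while you invoke the hockey-stick identity, which is the same computation. Your additional bijection with size-$m$ multisets from $\{1,\dots,n\}$ is a pleasant extra that the paper does not give, but it does not change the underlying approach.
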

\begin{proof}
	{
		By fixing the first index $i_1$, each subtensor of $\cal T$ reduces to an $(m-1)$th   {symmetric} tensor, where  only the $i_1$-dimensional tensor positioned at the {bottom}-left corner  may potentially possess nonzero values.
		Therefore, each subtensor has  $\left(i_1+m-2 \atop m-1\right)$   independent entries, and the total number of  independent entries in $\cal T$ is equal to $\sum_{i_1=1}^n \left(i_1+m-2 \atop m-1\right)$.
		Then by the {Pascal} equation
		\begin{equation*}
			\left(k \atop m\right)+\left(k \atop m-1\right) = \left(k+1\atop m\right),
		\end{equation*}
		we have
		\begin{eqnarray*}
			&&\sum_{i_1=1}^n \left(i_1+m-2 \atop m-1\right)\\
			&=& 1+m+\sum_{i_1=3}^n \left(i_1+m-2 \atop m-1\right)\\
			&=& \left(m+1\atop m\right)+\left(m+1 \atop m-1\right)+\sum_{i_1=4}^n \left(i_1+m-2 \atop m-1\right)\\
			&=& \left(m+2\atop m\right)+\left(m+2 \atop m-1\right)+\sum_{i_1=5}^n \left(i_1+m-2 \atop m-1\right)\\
			&=& \cdots \\
			&=& \left(n+m-2 \atop m\right)+\left(n+m-2 \atop m-1\right)\\
			&=& \left(n+m-1 \atop m\right).
		\end{eqnarray*}
		This completes the proof.
	}
\end{proof}

We see that a lower triangular sub-symmetric tensor ${\cal L} \in L_{m, n}$ has the same number of independent entries as a symmetric tensor ${\cal A} \in S_{m, n}$.   This is not by chance.  {We present a summary of the number of independent entries for various types of tensors in Table~\ref{tab:ind_entry}.}

\begin{table}
	\caption{Number of independent entries of  tensors, symmetric tensors, lower triangular tensors, and lower triangular sub-symmetric tensors, respectively.}\label{tab:ind_entry}
	\begin{center}
		\begin{tabular}{c|cccc}
			\hline
			$m$ & $T_{m,n}$ & $S_{m,n}$ & $L_{m,n}$  & ${LS}_{m,n}$\\ \hline
			2 & $n^2$ & $\frac{n(n+1)}2$ &  $\frac{n(n+1)}2$ & $\frac{n(n+1)}2$ \\
			3 & $n^3$ & $\frac{n(n+1)(n+2)}6$ &  $\frac{n(n+1)(2n+1)}6$&   $\frac{n(n+1)(n+2)}6 $\\
			4 & $n^4$ & $\frac{n(n+1)(n+2)(n+3)}{24}$ &  $\frac{n^2(n+1)^2}4$ & $\frac{n(n+1)(n+2)(n+3)}{24}$\\
			5 & $n^5$ & $\frac{n(n+1)(n+2)(n+3)(n+4)}{120}$ &  $\frac{n(n+1)(2n+1)(3n^2+3n-1)}{30}$&$\frac{n(n+1)(n+2)(n+3)(n+4)}{120}$ \\
			$m$ & $n^m$ & $\binom{n+m-1}{m}$ & $\sum_{i=1}^n i^{m-1}$& $\binom{n+m-1}{m}$ \\
			\hline
		\end{tabular}
	\end{center}
	
\end{table}

%Suppose that $\A \in S_{m, n}$.  If there is $\LL \in L_{m, n}$ such that $\A = g_m(\LL)$, then we say that $\A$ is {\bf triangularly decomposable}, call $\A = g_m(\LL) = f(\LL, \dots, \LL)$ a {\bf triangular decomposition} of $\A$, and $\LL$ a {\bf triangular factor} of $\A$.

%\section{Cholesky Decomposition of Third Order Hermitian Tensors}

\section{Third Order Hermitian Tensors and Third Order Positive Semi-Definite Tensors}

For $m=2$, by matrix analysis, positive {semi-definite} Hermitian matrices have triangular decomposition. %{as $S=LL^*$ for the Hermitian matrix $S$ and lower triangular matrix $L$}.
Such a matrix triangular decomposition is called Cholesky decomposition  {\cite{Gv96, Hi90, HJ13}}.   We intend to generalize this classical result to third order tensors.
%Third order Hermitian tensors and third order positive definite tensors have not been considered in the literature.   Hence,
{Hermitian tensors and   positive {semi-definite} tensors have not been considered in the literature.}
In this section, we {focus on third order tensors and} introduce {the concepts of} third order Hermitian tensors, third order positive semi-positive Hermitian tensors and third order positive
definite Hermitian tensors {in the {complex} domain}, and {delve into} their properties, to pave a way for studying
triangular decomposition of third order Hermitian tensors in the next section.

{A} third order $n$-dimensional complex cubic tensor $\A = \left(a_{ijk}\right)$ has entries $a_{ijk} \in {\mathbb C}$ for $i, j, k = 1, \dots, n$, where $n \ge 2$. %, %where ${\mathbb F} = {\mathbb R}$ or $\mathbb C$.
If $a_{ijk}$ is invariant for any even permutation of its indices, and changes to its conjugate for any odd even permutation of its indices, then $\A$ is called a {\bf third order Hermitian tensor}.   Thus, we have
\begin{equation}\label{Def:Hermitian}
	a_{ijk} = a_{jki} = a_{kij} = a_{jik}^* = a_{ikj}^* = a_{kji}^*,
\end{equation}
where $a^*$ means the conjugate of $a$.

Let $\A = \left(a_{ijk}\right)$ be a third order Hermitian tensor.   Define a function $F : {\mathbb C}^n \to {\mathbb R}^n$ by $\vy = F(\vx)$ with $\vy = (y_1, \dots, y_n)^\top$, $\vx = (x_1, \dots, x_n)^\top$
and
\begin{equation}
	y_i = \sum_{j, k= 1}^n a_{ijk}x_j^*x_k,
\end{equation}
for $i = 1, \dots, n$. Then
$$y_i^* = \sum_{j, k= 1}^n a^*_{ijk}x_jx_k^* = \sum_{j, k= 1}^n a_{ikj}x_k^*x_j = y_i.$$
This justifies that $\vy$ is a real vector.

If we always have $\vy = F(\vx) \in {\mathbb R}^n_+$, i.e., $\vy$ is a nonnegative vector, then we say that $\A$ is positive semi-definite.    If for {any} $\vx \not = \0$, we always have $\vy = F(\vx) \in {\mathbb R}^n_{++}$, ie., $\vy$ is a positive vector, then we say that $\A$ is positive definite.

By matrix analysis, we have the following propositions.

\begin{Prop} \label{cp1}
	A third order Hermitian tensor $\A = \left(a_{ijk}\right)$ is positive semi-definite if and only if $\A$ reduces to a positive semi-definite Hermitian matrix whenever one of the indices of its entries is fixed.
	A third order Hermitian tensor $\A = \left(a_{ijk}\right)$ is positive definite if and only if $\A$ reduces to a positive definite Hermitian matrix whenever one of the indices of its entries is fixed.
\end{Prop}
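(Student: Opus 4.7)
The plan is to reduce everything to the standard matrix case by introducing, for each fixed index position, a Hermitian ``slice'' matrix, and then to use the Hermitian symmetry \eqref{Def:Hermitian} to show these three families of slices are essentially the same matrices up to transposition and conjugation. First, for each $i=1,\ldots,n$, set $A^{(i)}_{jk}=a_{ijk}$. The identity $a_{ijk}=a_{ikj}^*$ from \eqref{Def:Hermitian} says precisely that $A^{(i)}$ is a Hermitian matrix. Similarly, setting $B^{(j)}_{ik}=a_{ijk}$ and $C^{(k)}_{ij}=a_{ijk}$, the remaining equalities in \eqref{Def:Hermitian} ($a_{ijk}=a_{kji}^*$ and $a_{ijk}=a_{jik}^*$) show that $B^{(j)}$ and $C^{(k)}$ are Hermitian too.

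Next, I would rewrite the defining equation of positive semi-definiteness in matrix form: $y_i=\sum_{j,k=1}^n a_{ijk}x_j^*x_k=\vx^* A^{(i)}\vx$. Hence ``$y_i\ge 0$ for all $\vx\in\mathbb{C}^n$ and all $i$'' is exactly the assertion ``$A^{(i)}$ is a positive semi-definite Hermitian matrix for every $i$''. This already gives the equivalence between positive semi-definiteness of $\mathcal{A}$ and positive semi-definiteness of every first-index slice; the analogous statement for positive definiteness follows by replacing the nonstrict inequality by the strict one, restricted to $\vx\neq\0$.

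It then remains to show that positive semi-definiteness of the first-index slices is equivalent to positive semi-definiteness of the second- and third-index slices. For this I would use \eqref{Def:Hermitian} to identify the other slices with (transposes/conjugates of) the $A^{(i)}$: from $a_{ijk}=a_{kij}$ we get $C^{(k)}=A^{(k)}$, and from $a_{ijk}=a_{jki}$ we get $B^{(j)}=(A^{(j)})^{\top}$. Since a Hermitian matrix $M$ is positive semi-definite iff $M^{\top}$ is (because $\vy^*M^{\top}\vy=(\bar{\vy})^*M\bar{\vy}$, and similarly for positive definite), the three conditions ``all $A^{(i)}$ PSD'', ``all $B^{(j)}$ PSD'', ``all $C^{(k)}$ PSD'' are mutually equivalent. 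This closes the proof.

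I do not expect a real obstacle here; the only minor subtlety is bookkeeping the six symmetry relations in \eqref{Def:Hermitian} carefully to recognize which triple of matrix identities $A^{(i)}$, $B^{(j)}$, $C^{(k)}$ are Hermitian and how they are interrelated. Once those are in place, the result is essentially a direct translation from the well-known matrix theorem that a Hermitian matrix $M$ is positive (semi-)definite iff $\vx^*M\vx>0$ (resp.\ $\ge 0$).
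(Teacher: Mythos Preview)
Your argument is correct and follows the same approach as the paper's proof: both reduce the tensor condition to positive (semi-)definiteness of the first-index slice matrices via $y_i=\vx^*A^{(i)}\vx$, and then invoke the Hermitian symmetry \eqref{Def:Hermitian} to transfer this to the other two index positions. Your write-up is simply more explicit than the paper's (which is a two-sentence sketch), in particular in spelling out the identifications $C^{(k)}=A^{(k)}$ and $B^{(j)}=(A^{(j)})^{\top}$ and why transposition preserves positive (semi-)definiteness.
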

{\begin{proof}
		The element $y_i$ is nonegative (resp. positive) if and only if the  matrix derived by fixing $i_1=i$ is positive semi-definite (resp. positive definite).
		The above results follows directly from the fact that $\A$ is Hermitian and by fixing any of the indices, the resulted  matrix is also  positive semi-definite (resp. positive definite).
\end{proof}}

\begin{Prop}\label{cp2}
	Suppose that $\A = \left(a_{ijk}\right)$ is a third order positive semi-definite Hermitian tensor.   Then for $i, j = 1, \dots, n$, $a_{iij}, a_{ijj}$ and $a_{iji}$ are all nonnegative real numbers.   If furthermore $\A$ is positive definite, then for $i, j = 1, \dots, n$, $a_{iij}, a_{ijj}$ and $a_{iji}$ are all positive real numbers.
\end{Prop}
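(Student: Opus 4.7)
The plan is to reduce this statement to the matrix-analytic fact that the diagonal of a Hermitian positive semi-definite matrix is nonnegative real (respectively strictly positive in the positive definite case), via Proposition \ref{cp1} combined with the six-fold symmetry \eqref{Def:Hermitian}.

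First, I would exploit \eqref{Def:Hermitian} to obtain realness and equality among the entries before any PSD hypothesis is used. Specializing $a_{ijk} = a_{jik}^{*}$ to the multi-index $(i,i,j)$ gives $a_{iij} = a_{iij}^{*}$, so $a_{iij}\in\mathbb{R}$; the cyclic equalities $a_{ijk}=a_{jki}=a_{kij}$ then force $a_{iij} = a_{iji} = a_{jii}$. The same computation applied to $(i,j,j)$ yields the realness of $a_{ijj}$ together with $a_{ijj} = a_{jij} = a_{jji}$. Thus it suffices to establish nonnegativity of one representative from each of the two orbits.

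Next, I would invoke Proposition \ref{cp1}: for each fixed $i$ the slice $B^{(i)} := (a_{ijk})_{j,k=1}^{n}$ is a Hermitian positive semi-definite matrix, and a standard result in matrix analysis (diagonal dominance of the quadratic form on basis vectors) gives $B^{(i)}_{jj} = a_{ijj}\ge 0$. Swapping the roles of $i$ and $j$ in the slicing yields $a_{jii}\ge 0$, and by the identifications of the previous paragraph this is exactly $a_{iij} = a_{iji}\ge 0$, which finishes the semi-definite case.

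Finally, the positive definite case is obtained by the verbatim same argument with strict inequality, using that the diagonal of a Hermitian positive definite matrix is strictly positive (apply the form to a standard basis vector). I do not expect any real obstacle here; the work is essentially bookkeeping on the Hermitian symmetry, with Proposition \ref{cp1} doing all the analytic lifting by passing from tensor positivity to matrix positivity along a coordinate slice.
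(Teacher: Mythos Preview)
Your argument is correct and is precisely the approach the paper takes: the paper's proof consists of the single line ``The above results follow directly from Proposition~\ref{cp1},'' and your write-up simply unpacks that line by recording the realness/orbit identifications from \eqref{Def:Hermitian} and then reading off the sign of the diagonal of each PSD (resp.\ PD) slice.
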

{\begin{proof}
		The above results follow directly from Proposition~\ref{cp1}.
\end{proof}}

If $\A$ is a third order positive semi-definite Hermitian tensor and also a real tensor, we may call it a third order positive semi-definite symmetric tensor.  If $\A$ is a third order positive definite Hermitian tensor and also a real tensor, we may call it a third order positive definite symmetric tensor.

Let $\vu = (u_1, \dots, u_n)^\top \in {\mathbb R}^n$.   Then we use $\vu^3$ to denote a third order tensor
$\vu^3 = \left(u_iu_ju_k\right)$.    Suppose that {for a given positive integer  {$r$},} we have nonnegative vectors $\vu^{(1)}, \dots, {\vu^{(r)}}$ such that
\begin{equation} \label{complete}
	\A = {\sum_{l=1}^r} \left(\vu^{(l)}\right)^3.
\end{equation}		
Then $\A$ is a third order completely positive tensor.    Furthermore, if {$r\ge n$ and} $\left\{ \vu^{(1)}, \dots, {\vu^{(r)}} \right\}$ spans ${\mathbb R}^n$, then $\A$ is a third order strongly completely positive tensor.   See \cite{QL17} for {more discussions} on completely positive tensors and strongly completely positive tensors.

\begin{Thm} \label{cp}
	A third order completely positive tensor $\A$ is a third order positive semi-definite symmetric tensor.
	
	Furthermore, suppose that $\A = (a_{ijk})$ is a third order completely positive tensor with the form (\ref{complete}). Let $\Gamma_i:=\{l\in [r]: u_i^{(l)}\neq 0\}$ for each $i\in [n]$. If rank$\left([\vu^{(l)}]_{l\in \Gamma_i}\right)=n$ for all $i\in [n]$, then $\A$ is a positive definite symmetric tensor.
\end{Thm}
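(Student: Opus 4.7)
The plan is to verify the two claims by direct substitution of the rank-one decomposition $\A=\sum_{l=1}^r (\vu^{(l)})^3$ into the definitions from Section~3.

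First, I would observe that symmetry of $\A$ is immediate: each summand $(\vu^{(l)})^3$ has entries $u_i^{(l)}u_j^{(l)}u_k^{(l)}$, which are invariant under any permutation of $(i,j,k)$, so $\A\in S_{3,n}$ and is in particular Hermitian with real entries. Hence only the positive semi-definiteness aspect of the first claim requires work.

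Next, for the PSD claim, I would take an arbitrary $\vx=(x_1,\dots,x_n)^{\top}\in \mathbb{C}^n$ and compute the components $y_i$ of $F(\vx)$. Substituting $a_{ijk}=\sum_{l=1}^r u_i^{(l)}u_j^{(l)}u_k^{(l)}$ and using the fact that each $u_j^{(l)}$ is real to pull the conjugation through, I would obtain
\begin{equation*}
y_i=\sum_{j,k=1}^n a_{ijk}x_j^*x_k=\sum_{l=1}^r u_i^{(l)}\Bigl(\sum_{j=1}^n u_j^{(l)}x_j^*\Bigr)\Bigl(\sum_{k=1}^n u_k^{(l)}x_k\Bigr)=\sum_{l=1}^r u_i^{(l)}\Bigl|\sum_{k=1}^n u_k^{(l)}x_k\Bigr|^2.
\end{equation*}
Because $u_i^{(l)}\ge 0$ and each squared modulus is nonnegative, every $y_i\ge 0$, establishing positive semi-definiteness.

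For the second (positive definiteness) claim, I would use the same identity and argue by contradiction: fix $i\in[n]$ and suppose $y_i=0$ for some $\vx\ne \0$. Since the sum has only nonnegative terms and for $l\in\Gamma_i$ the coefficient $u_i^{(l)}$ is strictly positive, $y_i=0$ forces $\sum_{k} u_k^{(l)}x_k=0$ for every $l\in\Gamma_i$. This means $\vx$ lies in the orthogonal complement of the real span of $\{\vu^{(l)}:l\in\Gamma_i\}$; but the rank hypothesis says these real vectors span $\mathbb{R}^n$, hence also span $\mathbb{C}^n$ over $\mathbb{C}$, forcing $\vx=\0$, a contradiction. Thus $y_i>0$ for all $i$ and all nonzero $\vx$, which is exactly the definition of $\A$ being positive definite.

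The computations are routine; there is no real obstacle. The only delicate point is to make sure, in the positive definite step, that one passes correctly from the real rank condition on $\{\vu^{(l)}\}_{l\in\Gamma_i}$ to the conclusion that no nonzero complex $\vx$ can be orthogonal to all of them, which follows because a real basis of $\mathbb{R}^n$ is automatically a $\mathbb{C}$-basis of $\mathbb{C}^n$.
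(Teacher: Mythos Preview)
Your proof is correct and follows essentially the same approach as the paper: both reduce the claim to showing that each slice $A(i)=\sum_{l} u_i^{(l)}\vu^{(l)}(\vu^{(l)})^{\top}$ is positive semi-definite (resp.\ positive definite under the rank hypothesis), with your computation of $y_i=\sum_l u_i^{(l)}\bigl|\langle \vu^{(l)},\vx\rangle\bigr|^2$ being exactly the quadratic form $\vx^*A(i)\vx$ written out. The paper phrases the argument via the slice matrices and Proposition~\ref{cp1}, while you work directly from the definition of $F(\vx)$, but the content is identical.
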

\begin{proof}   Suppose that $\A = (a_{ijk})$ is a third order completely positive tensor with the form (\ref{complete}).
%	Fix the first index $i$ for $\A = (a_{ijk})$. Then we have a square matrix
%	$$A(i) = \sum_{l=1}^r  u_i^{(l)} \left(\vu^{(l)}\right)^2.$$
%	By matrix analysis, this is a positive semi-definite symmetric matrix.
%	
%	Furthermore, since $\A$ has the rank-one decomposition (\ref{complete}),
For any $i\in [n]$, the $i$th slice matrix of $\A$ in any mode (by the symmetry of the tensor) takes the form of
	$$A(i) = \sum_{l=1}^r  u_i^{(l)} \vu^{(l)}\left(\vu^{(l)}\right)^\top.$$
The positive semi-definiteness of each $A(i)$ follows directly by the nonnegativity of all $\vu^{(l)}$'s. Thus, $\A$ is a third order positive semi-definite symmetric tensor by definition.
	
Furthermore, one has
$$A(i) = \sum_{l=1}^r  u_i^{(l)} \vu^{(l)}\left(\vu^{(l)}\right)^\top = \sum_{l\in \Gamma_i} u_i^{(l)} \vu^{(l)}\left(\vu^{(l)}\right)^\top.$$ Note that $u_i^{(l)}>0$ for each $l\in \Gamma_i$, and rank$\left([\vu^{(l)}]_{l\in \Gamma_i}\right)=n$. Thus $A(i)$ is positive definite for all $i\in [n]$. The desired result follows readily from the definition of the third order positive definite symmetric tensors.
\end{proof}

\begin{Cor}\label{cp3} If $\A$ is a third order strongly completely positive tensor with the form (\ref{complete}), and $\vu^{(1)}, \dots, \vu^{(r)}$ are all positive {vectors}, then $\A$ is a third order positive definite symmetric tensor.
\end{Cor}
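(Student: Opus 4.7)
The plan is to derive this corollary directly from the second assertion of Theorem~\ref{cp} by verifying that the full-rank hypothesis on $[\vu^{(l)}]_{l \in \Gamma_i}$ is automatically satisfied under the stronger assumptions.

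First I would observe that since each $\vu^{(l)}$ is a positive vector (every entry strictly positive), for every index $i \in [n]$ and every $l \in [r]$ we have $u_i^{(l)} > 0$. Consequently the index set $\Gamma_i = \{l \in [r] : u_i^{(l)} \neq 0\}$ coincides with the full set $[r]$ for each $i$. Next I would invoke the definition of a strongly completely positive tensor: the spanning condition $\mathrm{span}\{\vu^{(1)},\dots,\vu^{(r)}\} = \mathbb{R}^n$ is equivalent to $\mathrm{rank}\bigl([\vu^{(l)}]_{l\in[r]}\bigr) = n$. Combining these two observations gives $\mathrm{rank}\bigl([\vu^{(l)}]_{l \in \Gamma_i}\bigr) = n$ for every $i \in [n]$.

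With this in hand, all hypotheses of the second part of Theorem~\ref{cp} are met, so $\A$ is a third order positive definite symmetric tensor. There is no real obstacle here; the corollary is essentially a specialization where the positivity of the vectors forces $\Gamma_i = [r]$, reducing the per-index spanning condition to the global spanning condition already present in the definition of strong complete positivity.
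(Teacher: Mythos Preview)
Your proposal is correct and follows essentially the same approach as the paper: both arguments observe that positivity of all $\vu^{(l)}$ forces $\Gamma_i = [r]$ for every $i$, and then use the spanning condition in the definition of strong complete positivity to conclude that $\mathrm{rank}\bigl([\vu^{(l)}]_{l\in[r]}\bigr)=n$, so Theorem~\ref{cp} applies.
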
		
\begin{proof}
	Obviously, for a strongly completely positive tensor $\A$ with all $\vu^{(l)}$'s positive, the index set $\Gamma_i$ in Proposition \ref{cp3} turns out to be $[r]$ for each $i$, and rank$\left([\vu^{(l)}]_{l\in [r]}\right)=n$ holds since all $\vu^{(l)}$'s  span the entire space $\mathbb R^n$ by the definition of strongly completely positive tensors.
\end{proof}

As the class of completely positive tensors is an important cubic tensor class \cite{QL17}, the class of third order positive semi-definite symmetric tensors is fully justified.   The question is {whether} we can identify  additional classes of third order positive semi-definite symmetric tensors.   As to third order positive definite symmetric tensors, the followings are  {three} examples satisfying Theorem \ref{cp}.

{\bf Example 1} A third order positive Cauchy tensor $\A = \left(a_{ijk}\right)$,
where
$$a_{ijk} = {1 \over c_i + c_j + c_k},$$
where $\vc = (c_1, \dots, c_n)^\top$ is a positive vector in ${\mathbb R}^n_{++}$.  If all the components of $\vc$ are distinct, then $\A$ is a strongly completely positive tensor by Theorem 6.14 of \cite{QL17}.  By the related expressions of $\vu$ vectors in the proof of Theorem 6.14 of \cite{QL17}, we see that $\A$ satisfies the conditions of Theorem \ref{cp}, thus is a third order positive definite symmetric tensor.

{\bf Example 2} The third order Hilbert tensor $\A = \left(a_{ijk}\right)$ has the form
$$a_{ijk} = {1 \over i+j+k -2}.$$
By the discussion on Page 250 of \cite{QL17}, the Hilbert tensor can be regarded as a special positive Cauchy tensor with mutually distinct $c_i$.   Thus, the third order Hilbert tensor is a third order positive definite symmetric tensor.

%\blue{We think that third order symmetric Pascal tensors and third order Lehmer tensors are also third order positive definite symmetric tensors.   Strict proofs or disproofs need to be further explored.}

%{\bf Example 3} \blue{Third order symmetric Pascal Tensors.}

{\bf Example 3} The third order Lehmer Tensor $\A = \left(a_{ijk}\right)$ has entries
$$a_{ijk} = \frac{\min\{i,j,k\}}{\max\{i,j,k\}},~~\forall i,j,k\in [n].$$ It follows from the proof of Proposition 4.7 of \cite{LQ16} that $\A = \B\circ \D$ with
$$\B = \frac{1}{n} {\ve}^3 + \frac{1}{n(n-1)}({\ve}-{\ve}_n)^3 + \cdots + \frac{1}{2} {\ve}_1^3,$$
$$\D = {\ve}^3+({\ve}-{\ve}_1)^3+\dots + {\ve}_n^3,$$ where $\circ$ is the Hadamard product, ${\ve}_i$ is the $i$th column of the identity matrix and ${\ve}$ is the all one vector. Thus, for any $k\in [n]$, the $k$th slice matrix of $\A$ in any mode (by the symmetry of the tensor) takes the form of
$$ A(k) = B(k) \circ D(k),$$
where
\begin{eqnarray*}
	B(k) &=& \frac{1}{n}{\ve}{\ve}^\top+\frac{1}{n(n-1)}({\ve}-{\ve}_n)({\ve}-{\ve}_n)^\top+\dots \\
	&&+ \frac{1}{(k+1)k}\left({\ve}-\sum_{i=k+1}^n {\ve}_i\right)\left({\ve}-\sum_{i=k+1}^n {\ve}_i\right)^\top,
\end{eqnarray*}
$$ D(k) = {\ve}{\ve}^\top+\left(\sum_{i=2}^n {\ve}_i\right)\left(\sum_{i=2}^n {\ve}_i\right)^\top\cdots + \left(\sum_{i=k}^n {\ve}_i\right)\left(\sum_{i=k}^n {\ve}_i\right)^\top. $$
By virtue of {$\frac1n{\ve}{\ve}^\top \circ D(k) = \frac1n D(k)$ and  ${\ve}{\ve}^\top \circ B(k) = B(k)$, we have}
\begin{equation}\label{a}
	A(k) = B(k)+ {\frac1n}D(k) + Z_k,
\end{equation}
{where $Z_k = \left(B(k) - \frac{1}{n}{\ve}{\ve}^\top\right)\circ D(k) + B(k)\circ \left(D(k) -{\ve}{\ve}^\top\right)$. It follows from}
the fact that the Hadamard product of two positive semi-definite matrices is still positive semi-definite,
one can get {$Z_k\in {\mathbb{R}}^{n\times n}$ is a  positive semi-definite matrix.}
%the following expression by direct calculations: for some symmetric positive semi-definite matrix $Z_k\in {\mathbb{R}}^{n\times n}$.
Note that $B(k)+ D(k)$ is a completely positive matrix by definition (which is of course positive semi-definite), and the linear space spanned by all the involved nonnegative vectors is equal to the column space of the following matrix:
$$S = \left[{\ve}~~{\ve}-{\ve}_n ~~\dots~~ {\ve}-\sum_{i=k+1}^n {\ve}_i ~~  \sum_{i=k}^n {\ve}_i~~ \dots \sum_{i=2}^n {\ve}_i~~{\ve}\right]\in {\mathbb{R}}^{n\times (n+1)}.$$
One can easily verify that $S$ is of full row rank and hence the columns in $S$ span the entire space ${\mathbb{R}}^n$. Thus, $A(k)$ is a strongly completely positive matrix and hence positive definite. Noting the arbitrariness of $k\in [n]$, we conclude that the third order Lehmer tensor is a third order positive definite symmetric {tensor.}%matrix.

\section{Triangular Decomposition of Third Order Hermitian Tensors}
%\section{Third Order Lower Triangular Sub-Hermitian Tensors}

{A} third order $n$-dimensional complex cubic tensor $\A = \left(a_{ijk}\right)$ is called a third order sub-Hermitian tensor if by fixing $i$, $\A$ is reduced to a Hermitian matrix.
%We may extend the functions $f_3$ and $g_3$, defined in Section 2, to the complex case.
%Specifically,
Suppose that $\B = \left(b_{irs}\right)$, $\C = \left(c_{jtr}\right)$ {and} $\D = \left(d_{kst}\right)$ are third order $n$-dimensional sub-Hermitian tensors.    Then $\A = \left(a_{ijk}\right)= f(\B, \C,\D)$ is defined by
\begin{equation}\label{chain_multi3}
	a_{ijk} = \sum_{r, s, t = 1}^n b_{irs}c_{jtr}d_{kst}.
\end{equation}

Let $A^k=(a_{ijk})\in {\mathbb C}^{n\times n}$,  $B^s=(b_{irs})\in {\mathbb C}^{n\times n}$, and $C^t =\left(c_{jrt}\right)= \left(c_{jtr}^*\right)\in {\mathbb C}^{n\times n}$. Then {$B^s$ and $C^t$ are lower triangular matrices.}
By \eqref{chain_multi3}, we have
\begin{eqnarray*}
	A^k_{ij} &= & \sum_{s,t= 1}^n d_{kst} \left(  \sum_{r= 1}^n B^s_{ir}(C_{jr}^t)^*\right).
\end{eqnarray*}
Or equivalently,
\begin{equation}\label{chain_multi_mat_BCD}
	A^k= \sum_{s,t = 1}^n d_{kst} B^s(C^t)^*= \sum_{s,t = 1}^k d_{kst} B^s(C^t)^*.
\end{equation}

We  define $g(\B) \equiv f(\B, \B, \B)$, and call $g(\B)$ the cubic power of the third order sub-Hermitian tensor $\B$.    Then  if $\A = g(\B)$, we have
\begin{equation}\label{chain_multi_mat}
	A^k= \sum_{s,t= 1}^k b_{kst} B^s(B^t)^*.
\end{equation}
We have the following theorem.

\begin{Thm}
	For any third order sub-Hermitian tensor $\B$, its cubic power $\A = {g(\B)}$ is a third order Hermitian tensor.
\end{Thm}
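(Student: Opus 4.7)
The plan is to verify directly the six equalities in the definition of a third order Hermitian tensor, namely
$$a_{ijk} = a_{jki} = a_{kij} = a_{jik}^* = a_{ikj}^* = a_{kji}^*,$$
starting from the explicit formula
$$a_{ijk} = \sum_{r,s,t=1}^n b_{irs}\, b_{jtr}\, b_{kst}$$
obtained by specializing (\ref{chain_multi3}) to $\B=\C=\D$. The key observation is that each summation index $r,s,t$ appears exactly twice in the summand, once as a ``second'' index and once as a ``third'' index, and moreover these appearances follow a cyclic pattern around the triple $(i,j,k)$. This built-in cyclic structure should give the even permutations essentially for free, while the sub-Hermitian property should handle the odd permutations.

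First I would establish the three cyclic equalities. Relabeling the dummy variables by the cyclic shift $(r,s,t) \mapsto (s,t,r)$ in the sum for $a_{ijk}$ yields
$$a_{ijk} = \sum_{r,s,t=1}^n b_{ist}\, b_{jrs}\, b_{ktr} = \sum_{r,s,t=1}^n b_{jrs}\, b_{ktr}\, b_{ist} = a_{jki},$$
and the further shift $(r,s,t) \mapsto (t,r,s)$ produces $a_{kij}$. No use of the sub-Hermitian property is needed here; this is purely a relabeling argument.

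Next I would handle the odd permutations. Starting from
$$a_{jik} = \sum_{r,s,t=1}^n b_{jrs}\, b_{itr}\, b_{kst},$$
I take the complex conjugate and apply the sub-Hermitian identity $b_{i\alpha\beta}^* = b_{i\beta\alpha}$ to each of the three factors separately, producing
$$a_{jik}^* = \sum_{r,s,t=1}^n b_{jsr}\, b_{irt}\, b_{kts}.$$
Then a single relabeling of the summation indices (swapping the roles of $s$ and $t$ in the dummy variables) should reproduce exactly the formula for $a_{ijk}$, giving $a_{jik}^* = a_{ijk}$. The remaining two identities $a_{ikj}^* = a_{ijk}$ and $a_{kji}^* = a_{ijk}$ then follow by combining this with the already established cyclic symmetry: for instance, $a_{ikj}^* = (a_{ikj})^* = (a_{kji})^* = a_{jik}^{**} = a_{jik}$ composed with a cyclic shift, which after one more cyclic step brings us back to $a_{ijk}$.

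The only real obstacle is bookkeeping: making sure that after each conjugation the dummy relabeling is the correct one to line up the six factors with the target expression. This is routine but needs care because each of the three factors contributes a swap of its last two indices, and the resulting permutation of the summation indices must be tracked precisely. Once that is done, the theorem follows directly from the definitions.
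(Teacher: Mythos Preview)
Your proposal is correct and follows essentially the same approach as the paper: direct verification of the Hermitian identities by relabeling the summation indices in the formula $a_{ijk}=\sum_{r,s,t} b_{irs}b_{jtr}b_{kst}$ and invoking the sub-Hermitian relation $b_{i\alpha\beta}^*=b_{i\beta\alpha}$. The paper checks only $a_{ikj}=a_{ijk}^*$ explicitly and appeals to ``similarly'' for the rest, whereas you organize the argument by first isolating the cyclic equalities (which, as you note, need no sub-Hermitian hypothesis) and then deriving one conjugate identity from which the other two follow via the cycles; this is a slightly cleaner bookkeeping but not a different method.
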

\begin{proof}
	It follows from $a_{ijk} = \sum_{r,s,t = 1}^n b_{irs}b_{jrt}^*b_{kst}$ that
	\begin{eqnarray*}
		a_{ikj} &=& \sum_{r,s,t = 1}^n b_{irs}b_{krt}^*b_{jst} \\
		&=& \sum_{r,s,t = 1}^n b_{irs}b_{jts}^*b_{ktr} \\
		&=& \left(\sum_{r,s,t = 1}^n b_{isr}b_{jst}^*b_{krt}\right)^*\\
		&=& a_{ijk}^*.
	\end{eqnarray*}
	Similarly, we can show {other equalities in} \eqref{Def:Hermitian} holds. This completes the proof.
\end{proof}

Let  {a} third order sub-Hermitian tensor $\B = \left(b_{ijk}\right)$ be lower triangular, i.e., $b_{ijk} = 0$ as long as $j > i$ or $k > i$.     Then we call $\B$ a third order lower triangular sub-Hermitian tensor.
In {this} section, we aim to show that if $\A$ is a third order positive definite Hermitian tensor, then there always exists a third order lower triangular sub-Hermitian tensor $\B$ such that $\A = g_3(\B)$.

{%A cubic tensor ${\cal T} \in T_{m, n} = \left(t_{i_1\dots i_m}\right)$ is called a $k$-th semi-symmetric lower triangular tensor if
	%\begin{equation}
	%t_{i_1\dots i_m} = 0
	%\end{equation}
	%as long as the following rule is violated:
	%\begin{equation}
	%i_k \ge  \max_{j\neq k} i_j,
	%\end{equation}
	%and the subtensor $\mathcal T(:,i_k,:)$ is symmetric for any  $i_k=1,\dots,n$.
	
	%\begin{Prop}
	%Suppose that ${\cal T} \in T_{m, n}$ is a  semi-symmetric lower triangular tensor.   Then ${\cal T}$ has  $\left(n+m-1 \atop m\right)= {(n+m-1)! \over m!(n-1)!}$   independent entries.
	%\end{Prop}
	%\begin{proof}
	%\end{proof}
	
	%We see that a \red{$k$-th} semi-symmetric lower triangular tensor ${\cal T} \in T_{m, n}$ has the same number of independent entries as a symmetric tensor ${\cal A} \in S_{m, n}$.   This is not by chance.

	\begin{Thm}\label{Thm:triangular_decomp}
		Given a  third order $n$-dimensional Hermitian tensor $\mathcal S$.
		Let $S^1=(s_{ij1})$ be the $n$-by-$n$ matrix  {obtained by fixing the} third dimension of $\mathcal S$  as $1$.
		Suppose that either $S^1$  is a zero matrix, or {$s_{111}\neq 0$ and} $s_{111}^*S^1$ is a positive  {semi-definite}  matrix.  Then there exists
		a low  triangular sub-Hermitian tensor ${\cal L} \in  L_{3, n}$ such that
		\begin{equation}\label{equ:triangular_decomp}
			\mathcal S= g(\cal L).
		\end{equation}
	\end{Thm}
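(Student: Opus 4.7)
My plan is to construct $\mathcal{L}$ slice-by-slice along its third index, inductively building matrices $L^1,\ldots,L^n$ so that the matrix identity (\ref{chain_multi_mat}),
\[
S^k \;=\; \sum_{s,t=1}^{k} \ell_{kst}\, L^s (L^t)^*, \qquad k = 1,\ldots,n,
\]
is satisfied at every level. For the base case $k=1$, the $(1,1)$-entry forces $\ell_{111}^3 = s_{111}$, so I take $\ell_{111}$ to be the real cube root of the real scalar $s_{111}$, setting $L^1 = 0$ when $S^1 = 0$. Otherwise the hypothesis that $s_{111}^* S^1$ is positive semi-definite, combined with $s_{111}/\ell_{111} = \ell_{111}^2 > 0$, shows that $S^1/\ell_{111}$ is positive semi-definite Hermitian, and the classical Cholesky factorization (with diagonal signs adjusted so that $L^1_{11} = \ell_{111}$) supplies a lower triangular $L^1$ with $\ell_{111} L^1 (L^1)^* = S^1$.

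For the inductive step $k \geq 2$, I would use two bookkeeping observations. First, the sub-Hermitian and lower triangular structure of $\mathcal{L}$ predetermines every $\ell_{kst}$ with $\min(s,t) < k$ via $\ell_{kst} = L^t_{ks} = (L^s_{kt})^*$, so all entries of $L^k$ outside its lower-right $(n-k+1) \times (n-k+1)$ triangular block are already fixed; the genuinely new parameters at step $k$ are precisely the entries of this block, comprising $(n-k+1)^2$ real degrees of freedom, with $\ell_{kkk}$ the only real block-diagonal entry (the others $\ell_{iik}$ for $i > k$ being complex via $\ell_{iik} = \ell_{iki}^*$). Second, invoking the earlier theorem that $g(\mathcal{L})$ is automatically Hermitian for any sub-Hermitian $\mathcal{L}$, together with the Hermitian symmetry of $\mathcal{S}$ and the previously enforced slice equations, one sees that every entry $S^k_{ij}$ with $\min(i,j) < k$ is automatically consistent, since $s_{ijk} = s_{jki}$ (or $s_{kij}$) equates it with an already-satisfied entry of $S^i$ (or $S^j$). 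Hence the active constraints come only from the Hermitian sub-block of $S^k$ indexed by rows and columns $\geq k$, contributing exactly $(n-k+1)^2$ real equations matching the parameter count.

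Isolating the dependence on $L^k$, the active equation rewrites as
\[
\tilde{S}^k - \tilde{P} \;=\; \ell_{kkk}\, \tilde{L}^k (\tilde{L}^k)^* + \tilde{V} (\tilde{L}^k)^* + \tilde{L}^k \tilde{V}^*,
\]
where $\tilde{L}^k$ is the unknown lower triangular block and $\tilde{V},\tilde{P}$ are Hermitian matrices built from the already-constructed $L^1,\ldots,L^{k-1}$. The $(1,1)$-entry is a real cubic in $\ell_{kkk}$ with real coefficients, which always admits a real root; once $\ell_{kkk}$ is selected, completing the square via $\tilde{Y} = \tilde{L}^k + \tilde{V}/\ell_{kkk}$ reduces the remaining system to a Cholesky-type factorization on a smaller Hermitian block, and the degenerate case $\ell_{kkk} = 0$ is handled in parallel with the zero-matrix branch of the base case.

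The main obstacle is ensuring that the Hermitian matrix on the right-hand side of this inner Cholesky step is positive semi-definite, times the sign of $\ell_{kkk}$, at every level $k \geq 2$. Since the hypothesis of the theorem directly constrains only $S^1$, this positivity has to be propagated to the residuals at subsequent slices via a Schur-complement-type argument, with careful case analysis exploiting the sign freedom of the real cubic root and the zero-matrix fallback when the residual block degenerates. Establishing this positivity propagation cleanly is the delicate technical heart of the proof.
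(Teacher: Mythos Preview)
Your overall architecture---inductive slice-by-slice construction, the base case via matrix Cholesky on $S^1/\ell_{111}$, the bookkeeping that at level $k$ only the entries $\ell_{ijk}$ with $i\geq j\geq k$ are new, and the observation that the equations $S^k_{ij}$ with $\min(i,j)<k$ are automatically satisfied by Hermitian symmetry---all match the paper's proof exactly. The $(1,1)$ equation at level $k$ is indeed a real cubic in the real scalar $\ell_{kkk}$, as you say.

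The divergence, and the gap, is in how you handle the remaining block once $\ell_{kkk}$ is fixed. You complete the square and reduce to a factorization $\ell_{kkk}\,\tilde Y\tilde Y^{*}=(\text{known Hermitian})$, which forces the right-hand side to be semi-definite of the appropriate sign. You correctly flag this positivity as the ``delicate technical heart'', but in fact it does \emph{not} follow from the hypothesis: the theorem only constrains $S^1$, and there is no Schur-complement mechanism that propagates semi-definiteness of $s_{111}^*S^1$ to the residual blocks at later levels. So as stated your strategy cannot close.

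The paper avoids this obstacle entirely by \emph{not} reducing to a matrix Cholesky at levels $k\geq 2$. Instead it solves for the new entries of $L^k$ one scalar at a time, in the order
\[
\ell_{kkk}\ \text{(real cubic)},\quad \ell_{ikk}\ (i>k,\ \text{linear}),\quad \ell_{jjk}\ (j>k,\ \text{quadratic}),\quad \ell_{ijk}\ (i>j>k,\ \text{linear}),
\]
using the equations for $s_{kkk},s_{ikk},s_{jjk},s_{ijk}$ respectively. Each of these is a single polynomial equation in one real or complex unknown, and one simply appeals to the existence of roots---no positivity is ever invoked after the first slice. The hypothesis on $S^1$ is used only to launch the induction; thereafter the argument is purely algebraic. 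Your block-Cholesky reformulation packages these scalar equations into a matrix identity that is strictly more restrictive than what is actually required, and that extra rigidity is precisely what creates the unprovable positivity obstruction.
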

	\begin{proof}
		We show this result by {constructing a low  triangular  sub-Hermitian  tensor $\mathcal L$ satisfying \eqref{equ:triangular_decomp}.}
		Let {$S^i=\mathcal S(:,:,i)$ and} $L^i = \mathcal L(:,:,i)$ for any $i=1,\dots,n$. Then $L^i$ is a lower triangular matrix and the first $i-1$ rows are zeros.
		Furthermore,  {because}  $\mathcal L$ is a low  triangular {sub-Hermitian}  tensor,  the $i$-th column of $L^j$ is equal  to {the conjugate of} the $j$-th column of $L^i$, for any $i,j=1,\dots,n$.
		By the chain multiplication defined by \eqref{chain_multi_mat}, we have
		\begin{equation}\label{equ:Sk}
			{S^k} =\sum_{i,j=1}^k l_{kij} L^i(L^j)^*, {\ \  \forall\, k=1,\dots,n.}
		\end{equation}
		
		Let $k=1$. Then {\eqref{equ:Sk} reduces to}%we have {$l_{1ij}$ is nonzero only if $i=j=1$ and}
		\begin{equation}\label{S1}
			{S^1}= l_{111} L^1(L^1)^*.
		\end{equation}
		Specifically, $s_{111} =   l_{111}^2 l_{111}^*$.
		Thus, we  obtain $l_{111} = s_{111}|s_{111}|^{-\frac23}$.
		Consider the three cases.
		If $s_{111}= 0$ and {$S^1$} is a zero matrix, then $l_{111}=0$ and  $L^1$ can be any triangular matrix {with $L^1_{11}=0$}.
		If $s_{111}\neq 0$ and {$s_{111}^*{S^1}$} is positive {semi-definite}, then we can compute $L^1$ from the Cholesky decomposition of $\frac1{l_{111}}{S^1}$.
		%Otherwise, \eqref{S1} \red{does not have real valued solutions.} %is not solvable.

		Similarly, consider $k=2$. Then we have
		\begin{equation}\label{equ:S2}
			{S^2}= l_{211} L^1(L^1)^*+l_{212} L^1(L^2)^*+l_{221} L^2(L^1)^*+l_{222} L^2(L^2)^*.
		\end{equation}
		The coefficients $l_{211}=L^1_{21}$, ${l_{212}^*}=l_{221}=L^1_{22}$ are already known from $L^1$, yet $l_{222}$ is unknown.
		Considering the second row,  second column of \eqref{equ:S2}, we have
		\begin{eqnarray}
			\nonumber	{s_{222}} &=& l_{211} L_{2,1:2}^1(L_{2,1:2}^1)^*+2 Re\left(l_{212} L_{2,1:2}^1(L_{2,1:2}^2)^*\right)+l_{222} L_{2,1:2}^2(L_{2,1:2}^2)^* \\
			\nonumber	&=&l_{211}\sum_{j=1}^2|l_{2j1}|^2+{|l_{212}|^2(l_{211}+l_{211}^*+l_{222}+l_{222}^*)}+l_{222}\sum_{j=1}^2|l_{2j2}|^2\\
			&=& l_{222} |l_{222}|^2 + |l_{212}|^2 (2l_{222}+l_{222}^*) + |l_{212}|^2 (2l_{211}+l_{211}^*) +l_{211}|l_{211}|^2. \label{equation:s222}
		\end{eqnarray}
		Here, {$L_{2,1:2}^i$ is the row vector {concerning} the second and first two columns of $L^i$ and $Re(a)$ is the real part of $a$.}
		%% the first equality follows from the fact that $L^{i}$ is a lower triangular matrix   and $L^i_{2j}=0$ for $i=1,2$ and $j=3,\dots,n$.
		Equation \eqref{equation:s222} is   a  cubic equation concerning $l_{222}$, from which  we  could solve  {$l_{222}$}.			%{since there is at least one real solution.}
		Furthermore, for $i=3,\dots,n$, we have
		\begin{eqnarray}
			\nonumber	s_{i22} &=& l_{211} L_{i,1:2}^1(L_{2,1:2}^1)^*+l_{212} L_{i,1:2}^1(L_{2,1:2}^2)^*+l_{221}L_{i,1:2}^2(L_{2,1:2}^1)^*+l_{222} L_{i,1:2}^2(L_{2,1:2}^2)^* \\
			\nonumber	&=&  l_{211}\sum_{j=1}^2 l_{ij1}l_{2j1}^*  +l_{212} \sum_{j=1}^2 l_{ij1}l_{2j2}^* +l_{221}\sum_{j=1}^2 l_{ij2}l_{2j1}^* +l_{222}\sum_{j=1}^2 l_{ij2}l_{2j2}^*\\
			&=& \left(|l_{221}|^2+|l_{222}|^2 \right) l_{i22} +   (|l_{211}|^2+|l_{212}|^2)l_{i11} + 2Re(l_{212}(l_{211}+l_{222}^*)l_{i21}).\label{equation:si22}
		\end{eqnarray}
		Thus, by solving the above linear equation, we can compute $l_{i22}$ or equivalently  $L_{i,2}^{2}$ for $i=3,\dots,n$.
		
		By the third row, third column of \eqref{equ:S2}, we have
		\begin{eqnarray*}
			{s_{332}} &=& l_{211} L_{3,1:3}^1(L_{3,1:3}^1)^*+2Re(l_{212} L_{3,1:3}^1(L_{3,1:3}^2)^*)+l_{222} L_{3,1:3}^2(L_{3,1:3}^2)^* \\
			&=&l_{211}\sum_{j=1}^3|l_{3j1}|^2+2Re(l_{212} \sum_{j=1}^3l_{3j1}l_{3j2}^*)+l_{222}\sum_{j=1}^3|l_{3j2}|^2\\
			&=& l_{222} |l_{332}|^2 + 2Re(l_{212} l_{331} l_{332}^*) + l_{211}\sum_{j=1}^3|l_{3j1}|^2\\
			&&+2Re\left(l_{212} \sum_{j=1}^2l_{3j1}l_{3j2}\right) +l_{222}\sum_{j=1}^2l_{3j2}^2.
		\end{eqnarray*}
		This is a quadratic equation concerning $l_{332}$, from which we can obtain $l_{332}$.
		Furthermore, for $i=4,\dots,n$, we have
		\begin{eqnarray*}
			s_{i32} &=& l_{211} L_{i,1:3}^1(L_{3,1:3}^1)^*+l_{212} L_{i,1:3}^1(L_{3,1:3}^2)^*+l_{221} L_{i,1:3}^2(L_{3,1:3}^1)^*+l_{222} L_{i,1:3}^2(L_{3,1:3}^2)^* \\
			&=&l_{211}\sum_{j=1}^3l_{ij1}l_{3j1}^*+l_{212} \sum_{j=1}^3l_{ij1}l_{3j2}^*+l_{221} \sum_{j=1}^3l_{ij2}l_{3j1}^*+l_{222}\sum_{j=1}^3l_{ij2}l_{3j2}^*\\
			&=& (l_{221}l_{331}^* + l_{222}l_{332}^*) l_{i32} + l_{211}\sum_{j=1}^3l_{ij1}l_{3j1}^*+l_{212} \sum_{j=1}^3l_{ij1}l_{3j2}^*\\
			&&+l_{221} \sum_{j=1}^2l_{ij2}l_{3j1}^*+l_{222}\sum_{j=1}^2l_{ij2}l_{3j2}^*
		\end{eqnarray*}
		This is a linear equation concerning $l_{i32}$. Therefore, we can derive $l_{i32}$ for any $i=4,\dots,n$.

		{Similarly, for $j=4,\dots,n$ and $i=j+1,\dots,n$, we could compute $L_{jj}^{2}$ by solving the quadratic equation concerning $l_{jj2}$
			\begin{equation}\label{equ:s_jj2}
				s_{jj2} = l_{211} L_{j,1:j}^1(L_{j,1:j}^1)^*+2Re(l_{212} L_{j,1:j}^1(L_{j,1:j}^2)^*)+l_{222} L_{j,1:j}^2(L_{j,1:j}^2)^*,
			\end{equation}				
			and then compute $L_{i,j}^{2}$ by solving the linear equation concerning $l_{ij2}$
			\begin{equation}\label{equ:s_ij2}
				s_{ij2}  =  l_{211} L_{i,1:j}^1(L_{j,1:j}^1)^*+l_{212} L_{i,1:j}^1(L_{j,1:j}^2)^*+l_{221} L_{i,1:j}^2(L_{j,1:j}^1)^*+l_{222} L_{i,1:j}^2(L_{j,1:j}^2)^*,
			\end{equation}
			thereby completing the computation of $L^2$.
			
			By following this process,  for $k=3,\dots,n$, we set the $(k-1)$-th column of $L^i$ to be the $i$-th row of $L^{k-1}$, and compute $L^k_{kk}$ using the cubic equation concerning $l_{kkk}$
			\begin{equation}\label{equ:s_kkk}
				s_{kkk} = \sum_{j_1,j_2=1}^k l_{kj_1j_2} L^{j_1}_{k,1:k}(L^{j_2}_{k,1:k})^*.
			\end{equation}
			Then for $i=k+1,\dots,n$, we compute $L^k_{ik}$ using the linear equation concerning $l_{kkk}$
			\begin{equation}\label{equ:s_ikk}
				s_{ikk} = \sum_{j_1,j_2=1}^k l_{kj_1j_2} L^{j_1}_{i,1:k}(L^{j_2}_{k,1:k})^*.
			\end{equation}
			For $j=k+1,\dots,n$, we compute $L^k_{jj}$ by solving  the quadratic equation concerning $l_{jjk}$
			\begin{equation}\label{equ:s_jjk}
				s_{jjk} = \sum_{j_1,j_2=1}^k l_{kj_1j_2} L^{j_1}_{j,1:k}(L^{j_2}_{j,1:k})^*.
			\end{equation}
			Then for $i=k+1,\dots,n$, we compute $L^k_{ik}$ using the linear equation concerning $l_{kkk}$
			\begin{equation}\label{equ:s_ijk}
				s_{ijk} = \sum_{j_1,j_2=1}^k l_{kj_1j_2} L^{j_1}_{i,1:k}(L^{j_2}_{j,1:k})^*.
			\end{equation}
			
			This derives the formulations of $L^k$ for $k=3,\dots,n$ and thereby completes the proof.}		
	\end{proof}

	%\blue{\section{Algorithm and Numerical Analysis}}

		We present the triangular decomposition for third order Hermitian tensors in Algorithm~\ref{alg:triD}. 		
		\begin{algorithm}[H]
			\caption{The triangular decomposition for third order Hermitian tensors.} \label{alg:triD}
			{\bfseries Input:} A third order Hermitian tensor $\mathcal S\in S_{3,n}$.
			\begin{algorithmic}[1]
				\State Either compute $L^1$ by \eqref{S1}, or determine that the triangular decomposition does not exist.
				\State For any $i=2,\dots,n$, set the first column of $L^i$ to be the $i$-th row of $L^1$.
				\State Compute $L^2_{22}$ using the cubic equation in \eqref{equation:s222}.
				For $i=3,\dots,n$, compute   $L^2_{i2}$ by the linear equations in \eqref{equation:si22}.
				\State  For $j=3,\dots,n$ and $i=j+1,\dots,n$, we could compute $L_{jj}^{2}$ by solving the quadratic equation in \eqref{equ:s_jj2},  and then compute $L_{i,j}^{2}$ by solving the linear equation in \eqref{equ:s_ij2}.
				\For{$k=3,\dots,n$}
				\State For any $i=k,\dots,n$, set the $(k-1)$-th column of $L^i$ to be the $i$-th row of $L^{k-1}$.
				\State  Compute $L^k_{kk}$ using the cubic equation in  \eqref{equ:s_kkk}.
				\For{$i=k+1,\dots,n$}
				\State Compute $L^k_{ik}$ by the linear equations in  \eqref{equ:s_ikk}.
				\EndFor
				\For{$j=k+1,\dots,n$}
				\State  Compute $L^k_{jj}$ using the quadratic equation in  \eqref{equ:s_jjk}.
				\For{$i=j+1,\dots,n$}
				\State Compute $L^k_{ij}$ by the linear equations in \eqref{equ:s_ijk}.
				\EndFor
				\EndFor
				\EndFor
			\end{algorithmic}
			{\bfseries Output:} The low  triangular sub-Hermitian tensor ${\mathcal L} \in  L_{3, n}$.
		\end{algorithm}

		\begin{Cor}\label{Cor:Chol_TriD}
		Given a  {third order} {$n$-dimensional positive {semi-definite} Hermitian tensor $\mathcal S$}.
		Suppose that either  $s_{111}=0$ and $S^1$ is a zero matrix, or $s_{111} \not = 0$.
		Then there is 		a low  triangular {sub-Hermitian} tensor ${\cal L} \in  L_{3, n}$ such that
		\begin{equation}\label{equ:triD_P}
			\mathcal S=  {g(\cal L)}.
		\end{equation}
	\end{Cor}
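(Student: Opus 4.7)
The plan is to deduce this corollary directly from Theorem~\ref{Thm:triangular_decomp} by verifying that the stated hypotheses on $s_{111}$ and $S^1$, together with the positive semi-definite Hermitian structure of $\mathcal S$, imply the hypotheses of that theorem. So no new construction is needed; the role of the positive semi-definite assumption is merely to turn the abstract condition ``$s_{111}^* S^1$ is positive semi-definite'' into something automatic.

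First I would recall that $S^1 = (s_{ij1})$ is obtained from $\mathcal S$ by fixing the third index to $1$. Since $\mathcal S$ is a third order positive semi-definite Hermitian tensor, Proposition~\ref{cp1} tells us that $S^1$ is a positive semi-definite Hermitian matrix. In parallel, Proposition~\ref{cp2} says that every entry of the form $s_{iij}$, $s_{ijj}$, or $s_{iji}$ is a nonnegative real number; in particular $s_{111} \in \mathbb{R}_+$, so $s_{111} = s_{111}^*$.

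Now I would split into the two cases from the hypothesis. If $s_{111} = 0$ and $S^1$ is the zero matrix, then the first alternative in the hypothesis of Theorem~\ref{Thm:triangular_decomp} is met directly, and the desired triangular factorization $\mathcal S = g(\mathcal L)$ follows. If instead $s_{111} \neq 0$, then the nonnegativity from Proposition~\ref{cp2} upgrades this to $s_{111} > 0$, so $s_{111}^* = s_{111}$ is a positive real scalar; multiplying the positive semi-definite Hermitian matrix $S^1$ by this positive scalar preserves positive semi-definiteness, giving that $s_{111}^* S^1$ is positive semi-definite. Thus the second alternative in the hypothesis of Theorem~\ref{Thm:triangular_decomp} is met, and again we obtain a lower triangular sub-Hermitian $\mathcal L \in L_{3,n}$ with $\mathcal S = g(\mathcal L)$.

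There is essentially no hard step here: the whole point of the corollary is to record that the technical matrix-positivity assumption in the theorem is automatic under the natural tensor-level assumption of positive semi-definiteness. The only delicate thing to be careful about is that Proposition~\ref{cp1} is applied to the correct mode (fixing the third index), and that Proposition~\ref{cp2} is used to conclude $s_{111}$ is a \emph{real} nonnegative number (so that $s_{111}^* = s_{111}$); once those two observations are in place, the corollary is immediate from Theorem~\ref{Thm:triangular_decomp}.
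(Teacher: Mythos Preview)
Your proposal is correct and follows essentially the same approach as the paper: invoke Proposition~\ref{cp1} to get that $S^1$ is positive semi-definite, Proposition~\ref{cp2} to get $s_{111}\ge 0$, and then check the two alternatives of Theorem~\ref{Thm:triangular_decomp} case by case. Your explicit remark that $s_{111}^*=s_{111}$ (since $s_{111}$ is real) is a small clarification the paper leaves implicit, but otherwise the arguments coincide.
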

	\begin{proof}
		By Proposition~\ref{cp1}, $S^1\in\mathbb{C}^{n\times n}$ is a positive {semi-definite} Hermitian matrix.
		By Proposition~\ref{cp2}, $s_{111}$ is {nonnegative}.
		{If  $s_{111}=0$, then $S^1$ is a zero matrix.
			Otherwise, if $s_{111}>0$, then $s_{111}S^1$ is positive semi-definite.
			In both cases,   $S^1$ satisfies the conditions in Theorem~\ref{Thm:triangular_decomp},  thereby guaranteeing the   existence of    ${\cal L} \in  L_{3, n}$ such that \eqref{equ:triD_P}  is satisfied.}
		
%		\red{Consider the following cases.
%			
%			If   $S^1$ is a zero matrix, or   $s_{111}\neq 0$ and
%			$s_{111}^*S^1$ is   positive {semi-definite}, then by setting $P$ as the identity matrix and invoking    Theorem~\ref{Thm:triangular_decomp}, we can conclude that  \eqref{equ:triD_P} holds.
%			
%			If $S^1$ is not a zero matrix and $s_{111}=0$, then there exists at least one index $k=2,\dots,n$ such that  $s_{1kk}\neq 0$. Set $P$ as the switching matrix between the first and the $k$-th row, i.e.,
%			\begin{equation*}
%				P=(p_{ij}) \text{ with }p_{ij}=\left\{
%				\begin{array}{cl}
%					1 & \text{ if } i=j=2,\dots k-1,k+1,\dots,n,\\
%					1 &  \text{ if }i=1, j = k \text{ or }i=k, j = 1,\\
%					0 & \text{ otherwise},
%				\end{array}
%				\right.
%			\end{equation*}
%			then $\tilde S^1=PS^1P^\top$ is also a positive semi-definite matrix. Denote $\tilde{\mathcal S}=\mathcal S\red{\times_2 P \times_3 P}$.  Subsequently,     $\tilde S^1$ satisfies the conditions in Theorem~\ref{Thm:triangular_decomp},  thereby guaranteeing the   existence of    ${\cal L} \in  L_{3, n}$ such that \eqref{equ:triD_P}  is satisfied.}
			
			This  completes the proof.
	\end{proof}
	
	{Corollary~\ref{Cor:Chol_TriD}} extends the Cholesky decomposition of positive definite Hermitian matrices to third order tensors.	
	
	%It should be noted that the above process involves computing the Cholesky decomposition and the roots of quadratic equations, which may introduce complex numbers. Thus, in the next section, we consider triangular decomposition of third order complex Hermitian tensors.   We will define third order complex Hermitian tensors there.

}

		}
		
		%\section{Algorithm and Numerical Analysis}

		\section{Final Remarks}
		
		In this paper, we introduced lower triangular tensors and   extended the Cholesky decomposition of positive  semi-definite  Hermitian matrices to third order tensors.  There are two problems which need to be considered further.
		
	 1. In the proof of Proposition \ref{eig}, we actually show that $t_{i,\dots,i}$ is an eigenvalue of $\cal T$ with a multiplicity {at least} $(m-1)^{n-i}$.    This is actually deficient.   In fact, by computation, we think that $t_{i,\dots,i}$ is an eigenvalue of $\cal T$ with a multiplicity $(m-1)^{n-1}$.
			In this way, the total number of eigenvalues of $\cal T$ is $n(m-1)^{n-1}$, which is consistent with the tensor eigenvalue theory \cite{Qi05, QL17}.
			%At this moment, we do not where the problem occurs.
			
			2. Beside third completely positive tensors, are there any other meaningful classes of third order positive semi-definite tensors?
			
			Actually, it is not easy to compute eigenvalues of a general third order symmetric tensors.   Then, for
			third order positive semi-definite symmetric tensors, the eigenvalues of their triangular factor tensors, which are easy to be calculated, may be used as a tool of spectral analysis of such symmetric tensors.
		
		\bigskip	
		
		%\bigskip
		
		{{\bf Acknowledgment}}
		This work was partially supported by Research  Center for Intelligent Operations Research, The Hong Kong Polytechnic University (4-ZZT8),   the R\&D project of Pazhou Lab (Huangpu) (Grant no. 2023K0603),  the National Natural Science Foundation of China (Nos. 12471282 and 12131004), and the Fundamental Research Funds for the Central Universities (Grant No. YWF-22-T-204).
		
		%Lie algebra, {and Professor Yuanhua Ni and his students for discussion on kinematics control}.   In particular, we are grateful to Professor Chengming Bai and his Ph.D. student Yuanchang Lin, whose note enabled the Lie algebra identification in Section 3.
		
		%and Zhongming Chen for the discussion on standard dual quaternion optimization, to Wei Li for the discussion on hand-eye calibration, to Jiantong Cheng for the discussion on SLAM, to Guyan Ni for introducing Jiantong Cheng to me, and to Chen Ouyang and Jinjie Liu for Figures 1 and 2.   I would like to thank two anonymous referees who carefully read my manuscript and gave very helpful comments.

		{{\bf Data availability} Data will be made available on reasonable request.

			{\bf Conflict of interest} The authors declare no conflict of interest.}

		%\section*{Compliance with ethical standards}
		%\bigskip
		
		%{\bf Conflicts of Interest} The author declares no conflict of interest.

		% \vspace{100pt}

	\end{document}